\newtheorem{theorem}{Theorem}
\newtheorem{proposition}[theorem]{Proposition}
\newtheorem{lemma}[theorem]{Lemma}
\newtheorem{corollary}[theorem]{Corollary}
\newtheorem{conjecture}[theorem]{Conjecture}
\newtheorem{observation}[theorem]{Observation}
\newtheorem*{chernoff}{The Chernoff Bound}
\newtheorem*{slll}{The Lov\'asz Local Lemma}
\newtheorem*{glll}{The General Local Lemma}
\renewcommand{\Pr}{\,\mathbb{P}}
\DeclareMathOperator{\Bin}{Bin}
\DeclareMathOperator{\ch}{ch}
\DeclareMathOperator{\sep}{sep}
\DeclareMathOperator{\con}{\nleftrightarrow}
\newcommand{\chicon}{\chi_{\con}}
\renewcommand{\labelenumi}{\theenumi}
\title{Single-conflict colouring\footnote{This research was partly supported by a Van Gogh grant, reference 35513NM.}}
\author{
Zden\v{e}k Dvo\v{r}\'ak
\thanks{Computer Science Institute (CSI) of Charles University,
           Malostransk{\'e} n{\'a}m{\v e}st{\'\i} 25, 118 00 Prague, 
           Czech Republic. E-mail: \protect\href{mailto:rakdver@iuuk.mff.cuni.cz}{\protect\nolinkurl{rakdver@iuuk.mff.cuni.cz}}.
           Supported by (FP7/2007-2013)/ERC Consolidator grant LBCAD no. 616787.}
\and
Louis Esperet
\thanks{Universit\'e Grenoble Alpes, CNRS, G-SCOP, Grenoble, France.
Email: \protect\href{mailto:louis.esperet@grenoble-inp.fr}{\protect\nolinkurl{louis.esperet@grenoble-inp.fr}}.
This author is partially supported by ANR Projects STINT
  (\textsc{anr-13-bs02-0007}) and GATO (\textsc{anr-16-ce40-0009-01}), and LabEx PERSYVAL-Lab
  (\textsc{anr-11-labx-0025}).
}
\and
Ross J. Kang
\thanks{Department of Mathematics, Radboud University Nijmegen, Netherlands. 
Email: \protect\href{mailto:ross.kang@gmail.com}{\protect\nolinkurl{ross.kang@gmail.com}}. Supported by a Vidi grant (639.032.614) of the Netherlands Organisation for Scientific Research (NWO). This author is grateful to the hospitality of Yokohama National University, where part of this research originated.}
\and
Kenta Ozeki
\thanks{Faculty of Environment and Information Sciences, Yokohama National University, Japan. 
Email: \protect\href{mailto:ozeki-kenta-xr@ynu.ac.jp}{\protect\nolinkurl{ozeki-kenta-xr@ynu.ac.jp}}.}
}
\begin{document}

\maketitle

\begin{abstract}
Given a multigraph, suppose that each vertex is given a local assignment of $k$ colours to its incident edges. We are interested in whether there is a choice of one local colour per vertex such that no edge has both of its local colours chosen. The least $k$ for which this is always possible given any set of local assignments we call the {\em single-conflict chromatic number} of the graph. This parameter is closely related to separation choosability and adaptable choosability. We show that single-conflict chromatic number of simple graphs embeddable on a surface of Euler genus $g$ is $O(g^{1/4}\log g)$ as $g\to\infty$. This is sharp up to the logarithmic factor.

\smallskip
{\bf Keywords}: list colouring, DP colouring, adaptable choosability, single-conflict chromatic number, graphs on surfaces.
\end{abstract}

\section{Introduction}\label{sec:intro}

Dvo\v{r}\'{a}k and Postle~\cite{DvPo17} and Fraigniaud, Heinrich and Kosowski~\cite{FHK16} independently defined the {\em conflict $k$-colouring} problem as follows. Given a (simple) graph $G=(V,E)$, 
each edge $uv\in E$ is assigned a list $K(u,v)$ of ordered pairs ---called {\em conflicts}--- of colours from $[k]=\{1,\dots,k\}$.
The question is whether $G$ admits a colouring $c:V\to[k]$ of the vertices so that no edge is in a conflict, i.e.~there is no edge $uv \in E$ and conflict $(\kappa_u,\kappa_v)\in K(u,v)$ such that $c(u)=\kappa_u$ and $c(v)=\kappa_v$.
The authors in~\cite{DvPo17} and~\cite{FHK16} also imposed further natural restrictions based on contrasting goals and perspectives, but here instead we only prescribe the maximum number $\mu$ of conflicts per edge.

In fact, this is equivalent to the ``least nontrivially conflicting'' version of the problem, with exactly one conflict per edge, provided we pass to a {\em multigraph} of maximum edge multiplicity $\mu$. (This motivates our first use of the letter $\mu$.) 
Let us be more precise. Let $G=(V,E)$ be a multigraph. For any positive integer $k$,  a {\em local $k$-partition} of $G$ is a collection $\{\kappa_v\}_{v\in V}$ of maps of the form $\kappa_v: E(v) \to [k]$, where $E(v)$ denotes the set of edges incident to $v$. So each $\kappa_v$ is a partition of $E(v)$ into $k$ parts, and for each $e\in E(v)$ the colour $\kappa_v(e)$ can be thought of as the {\em local} colour\footnote{By relabelling, we alternatively may define the $\kappa_v$ as maps from $E(v)$ to $\mathbb{N}$ each image set of which contains at most $k$ elements, so not necessarily the same image for every $v$.} of $v$ associated to $e$. 
For each edge $uv$ in the underlying simple graph we want the set $K(u,v)$ of conflicts to be composed of the pairs $(\kappa_u(e),\kappa_v(e))$, for every $e\in E$ with endpoints $u$ and $v$.
Thus, given a local $k$-partition $\{\kappa_v\}$, we say $G$ is {\em conflict $\{\kappa_v\}$-colourable} if there is some colouring $c: V\to [k]$ of the vertices so that no edge $e=uv\in E$ has $c(u)=\kappa_u(e)$ and $c(v)=\kappa_v(e)$.
The {\em single-conflict chromatic number} $\chicon(G)$ of $G$ is the smallest $k$ such that $G$ is conflict $\{\kappa_v\}$-colourable for any local $k$-partition $\{\kappa_v\}_{v\in V}$.

As we discuss in Section~\ref{sec:definitions}, single-conflict chromatic number considerably strengthens upon two notable list colouring parameters, separation choosability (cf.~\cite{KTV98a}) and adaptable choosability (cf.~\cite{KoZh08}), and so its study could potentially yield new insights into these two parameters.

Before continuing, we give two easy but instructive examples. First, for a square integer $\mu$, consider two vertices with $\mu$ edges between them. Take the local $\sqrt{\mu}$-partition which lists all $\mu$ possible pairwise conflicts between the two vertices. So this is a $\mu$-edge planar multigraph with maximum degree and multiplicity both $\mu$ that has single-conflict chromatic number greater than $\sqrt{\mu}$.
Second, for positive integers $a$ and $\mu$, consider $(a\mu)^a$ vertices, written $v_{i_1,\dots,i_a}$, $i_1,\dots i_a\in[a\mu]$, each joined by $\mu$ edges to each of $a$ vertices $u_1,\dots,u_a$. Take the local $(a\mu)$-partition where for each $j\in[a]$ and $i_1,\dots,i_a\in [a\mu]$  the edges between $u_j$ and $v_{i_1,\dots,i_a}$ include all conflicts having $i_j$ as a local colour for $u_j$ and some integer in $[j\mu]\setminus [(j-1)\mu]$ as a local colour for $v_{i_1,\dots,i_a}$. For any $i_1,\dots i_a\in[a\mu]$, if $u_j$ is coloured $i_j$ for all $j\in[a]$, there remains no possibility for colouring $v_{i_1,\dots,i_a}$. 
Thus the multigraph formed from the complete bipartite graph $K_{a,(a\mu)^a}$ by multiplying each edge by $\mu$ has single-conflict chromatic number greater than $a\mu$.
(Note that by substituting $(a\mu)^a$ copies of the first example, one can boost this to $a\mu+\Omega(\sqrt{\mu})$.)
With $a=2$ this yields a planar multigraph of maximum multiplicity $\mu$ that has single-conflict chromatic number greater than $2\mu$. 

Besides introducing single-conflict chromatic number and setting down some of its basic behaviour, our main task in this paper is to treat it in a classic setting for chromatic graph theory.
We prove the following.

\begin{theorem}\label{thm:surfaces}
For some constant $C_1>0$, if $G$ is a multigraph of maximum multiplicity $\mu\ge 1$ that is embeddable on a surface of Euler genus $g$, then $\chicon(G) \le \max\{C_1 \sqrt{\mu}(g+1)^{1/4} \log (\mu^2(g+2)),8\mu\}$.
\end{theorem}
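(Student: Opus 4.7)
The plan is to proceed by strong induction on $|V(G)|$, setting $k := \lceil \max\{C_1 \sqrt{\mu}(g+1)^{1/4}\log(\mu^2(g+2)),\, 8\mu\}\rceil$. First I would eliminate low-degree vertices: if some $v\in V(G)$ satisfies $d_G(v)\le k-1$, then the inductive hypothesis applied to $G-v$ (which is still embeddable on the same surface and has the same maximum multiplicity) delivers a conflict $\{L_u\}_{u\ne v}$-colouring, and this extends to $v$ greedily since each of the at most $k-1$ incident edges forbids at most one colour. So we may assume every vertex of $G$ has degree at least $k$. Let $H$ denote the underlying simple graph of $G$; Euler's formula gives $|E(G)| \le \mu|E(H)| \le \mu(3|V(G)|+3g-6)$, and combining with $k|V(G)|/2\le|E(G)|$ forces $|V(G)|\le 6\mu g/(k-6\mu)=O(\mu g/k)$. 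In particular, $|V(G)|$ is small and $|E(G)|=O(\mu g)$.

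For the resulting dense regime, I would turn to the probabilistic method. Colour each vertex $v$ uniformly and independently at random from $[k]$. For each pair $\{u,v\}$ adjacent in $H$, let $B_{uv}$ denote the event that some edge between $u$ and $v$ realises its prescribed conflict; a union bound over at most $\mu$ parallel edges yields $\Pr(B_{uv})\le\mu/k^2$. Since $B_{uv}$ depends only on $c(u)$ and $c(v)$, it is mutually independent of all $B_{u'v'}$ with $\{u,v\}\cap\{u',v'\}=\emptyset$. The aim is to invoke a suitable form of the Lov\'asz Local Lemma ensuring that with positive probability none of the $B_{uv}$ occurs, giving a conflict $\{L_v\}$-colouring.

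The principal obstacle, and what I expect to be the technical heart of the proof, is bridging the gap between the naive symmetric LLL bound $k=\Omega(\sqrt{\mu\Delta_H})$---too weak, since $\Delta_H$ could be as large as $|V(G)|-1$---and the target $k=O(\sqrt{\mu}g^{1/4}\log(\mu^2 g))$. I would partition the vertices of $H$ by degree at some threshold $D\approx\sqrt{g}$: by Euler's formula, only $O(|E(H)|/D)=O(\sqrt{g})$ vertices exceed $D$, while on the low-degree part the symmetric LLL yields $k=O(\sqrt{\mu D})=O(\sqrt{\mu}\,g^{1/4})$. Handling the few high-degree vertices and, crucially, the cross-edges between the two classes is the delicate step; I would expect it to be carried out either by an asymmetric/weighted LLL where the weight $x_{uv}$ of a bad event is tuned to the degrees $d_H(u),d_H(v)$, or via a nibble-style iteration that reduces the palette and residual conflicts geometrically, with $O(\log(\mu^2 g))$ rounds accounting for the logarithmic factor in the bound (the argument $\mu^2(g+2)$ inside the log suggests a union bound over roughly $\mu^2|V(G)|$ potential conflict sites in a concentration step). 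Reconciling the sharp degree-heterogeneity of $H$ with the global uniformity imposed by the surface embedding is where the main probabilistic work lies.
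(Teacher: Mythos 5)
Your reduction matches the paper's: minimizing a counterexample, peeling off vertices of degree below $k$ so that the minimum degree is at least $k$, then using Euler's formula to force $|V(G)|=O(\mu g/k)$ and $|E(G)|=O(\mu g)$, which reduces the theorem to an edge-count bound (the paper's Theorem~\ref{thm:edges}). You also correctly identify the degree-heterogeneity of $H$ as the obstacle, and the split into a small set $A$ of high-degree vertices and a low-degree remainder $B$ is exactly the paper's decomposition.

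However, the technical core — getting past the cross-edges between $A$ and $B$ — is left as a sketch of possibilities, and the two routes you gesture at do not appear to close the gap. If you colour every vertex uniformly at random and apply an asymmetric LLL to events $B_{uv}$, then a cross-edge event at a vertex $u\in A$ of degree $\Theta(\mu g)$ is dependent on $\Theta(\mu g)$ other events, and the best the General Local Lemma can give is roughly $\mu/k^2 \lesssim 1/(e\mu g)$, i.e.\ $k=\Omega(\mu\sqrt{g})$ — far above the target $\sqrt{\mu}\,g^{1/4}\log(\mu^2 g)$, so tuning weights to degrees does not rescue a one-shot LLL on the full edge set. The paper instead proceeds in two phases (Lemma~\ref{lem:edges}): it first colours only $G[A]$, and the LLL there is applied not just to intra-$A$ conflict events but also to extra ``overload'' events (one per vertex of $B$) bounding the number of conflicts a vertex in $B$ receives from $A$ to at most $\sqrt{d}$. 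Crucially, colours are chosen by \emph{independent Bernoulli trials per (vertex, colour) pair}, not by a uniform colour per vertex; this, together with a pruning step capping how many intra-$A$ edges can share a local colour at a vertex, is what makes the overload events have bounded dependence. Once $A$ is so coloured, each vertex of $B$ loses at most $\sqrt{d}$ colours, and $G[B]$ (maximum degree $\le d$) is then finished by the symmetric LLL bound of Proposition~\ref{prop:max}. Neither the two-phase structure nor the Bernoulli-trial device nor the per-$B$-vertex overload events are present in your outline, and all three are essential; a nibble iteration is a plausible alternative shape but is likewise unsubstantiated here, so as written the proposal has a genuine gap at its heart.
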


\noindent
Note that the $8\mu$ term cannot be lowered to $2\mu$ due to the second example above.
We will see below that the other term is sharp up to at most a polylogarithmic factor.

Allow us to reiterate the $\mu=1$ case, which may be interpreted as an analogue of Heawood's classic formula for the chromatic number~\cite{Hea90}.
\begin{corollary}
For some constant $C>0$, if $G$ is a simple graph that is embeddable on a surface of Euler genus $g$, then $\chicon(G) \le C (g+1)^{1/4} \log (g+2)$.
\end{corollary}

\noindent
The $\mu=\Theta(\sqrt{g})$ case is also of special interest, in which case Theorem~\ref{thm:surfaces} implies a bound of the form $\chicon(G) = O(\sqrt{g}\log g)$. 
Since the number of colours available per vertex is close to the maximum multiplicity and both are around $\sqrt{g}$, this result is evocative of Heawood's bound itself.
Naturally one could aspire towards an elimination of the logarithmic factor.
\begin{conjecture}\label{conj:heawood}
There exist $C,C'>0$ such that, for any simple graph $G$ that is embeddable on a surface of Euler genus $g$, if every edge is assigned at most $C k$ conflicts from $[k]^2$, then $G$ is conflict $k$-colourable, provided $k\ge C' \sqrt{g}$.
\end{conjecture}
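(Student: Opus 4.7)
The plan is to use the Lovász Local Lemma on a uniformly random colouring, exploiting the low degeneracy of graphs on surfaces. By Heawood-type estimates, every graph $G$ embeddable on a surface of Euler genus $g$ is $(H(g)-1)$-degenerate with $H(g) = O(\sqrt{g})$, so there is an ordering $v_1 < \cdots < v_n$ in which each vertex has at most $d = O(\sqrt{g})$ earlier neighbours. Set $k = C\sqrt{g}$ with $C = C(\eps)$ to be chosen, colour each $v_i$ independently uniformly from $[k]$, and let $A_e$ denote the event that edge $e = uv$ is in conflict, which by hypothesis has probability at most $(1-\eps)/k$.

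The principal obstacle is that graphs on surfaces can have unbounded maximum degree, so the symmetric LLL only yields a bound of the form $k = O(\Delta(G))$ through the $\le 2\Delta(G)$ dependency degree. To bypass this, orient each edge $v_iv_j$ with $i<j$ from $v_j$ to $v_i$, so that every vertex has out-degree at most $d$, and apply an asymmetric or cluster-expansion form of the LLL (e.g.\ Bissacot et al.) with weights of the form $x_e = \alpha/k$ for a constant $\alpha = \alpha(\eps)$. The hope is that the $(1-\eps)/k$ probability bound, combined with the $O(\sqrt{g})$ degeneracy, closes the LLL inequality once $C$ is large enough. The $(1-\eps)$ slack in the conflict count is precisely what should absorb the logarithmic factor present in Theorem~\ref{thm:surfaces}, in the spirit of how Johansson-type bounds shave a $\log\Delta$ factor off the trivial $\Delta+1$ colouring bound.

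A plausible backup is entropy compression: colour the $v_i$ in the degeneracy order, and whenever $v_i$ conflicts with an earlier neighbour, resample $c(v_i)$ uniformly from $[k]$. A short calculation shows that for each earlier neighbour $v_j$ with conflict set $K(v_j v_i)$ of size at most $(1-\eps)k$, the expected number of colours forbidden for $v_i$ by a uniformly random $c(v_j)$ is $|K(v_j v_i)|/k \le (1-\eps)$; summing over the at most $d$ earlier neighbours gives an expected forbidden set of size $O((1-\eps)\sqrt{g})$, leaving a positive fraction of $[k]$ available when $k = C\sqrt{g}$ with $C$ sufficiently large relative to $\eps$.

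The hard part, in either approach, will be controlling the correlations between the sequential resamples (in the entropy compression version) or verifying the cluster-expansion LLL inequality uniformly in $\Delta(G)$ (in the LLL version); the standard Moser-Tardos framework should supply the machinery for the former, while the latter may ultimately require a multi-round "nibble" procedure in which one colours batches of vertices at similar depth in the degeneracy order, leaving only a sparser residual subgraph on which to iterate.
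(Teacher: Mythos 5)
First, a point of orientation: in the paper this statement is a \emph{conjecture}, not a theorem --- the authors explicitly present it as open, and the closest they come is the $\mu = \Theta(\sqrt{g})$ case of Theorem~\ref{thm:surfaces}, which still carries a $\log$ factor. So there is no internal proof to compare against; your proposal, if it worked, would resolve an open problem. It does not, and the two routes you sketch each have a concrete gap that is precisely the difficulty the authors ran into.

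The LLL route fails because the dependency structure is governed by which random variables the bad events share, not by any orientation of the edges. A vertex $u$ of degree $\Delta$ creates $\Delta$ bad events that all read $c(u)$ and are therefore pairwise dependent, regardless of how you orient $E(u)$; the cluster-expansion or asymmetric LLL inequality for an edge $e \ni u$ then contains a product of $\Delta-1$ factors $(1-x_f)$, which with $x_f = \Theta(1/k)$ decays like $e^{-\Theta(\Delta/k)}$. Since $k = C\sqrt{g}$ while $\Delta$ is unbounded (even for planar graphs), the inequality cannot be closed: you cannot shrink the $x_f$ to compensate, because $\Pr(A_e) = (1-\eps)/k$ is a fixed lower threshold that the weights must sit above, and making $x_e$ larger to absorb the product forces $x_e > 1$ once $\Delta \gg k\log k$. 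The $(1-\eps)$ slack buys a constant, not the exponential loss in $\Delta/k$. This is exactly why the paper's Lemma~\ref{lem:edges} does not apply the LLL directly to the whole graph, but first partitions $V = A \cup B$ into high- and low-degree vertices, colours the sparse high-degree set $A$ by itself while provably not destroying too many local colours at each $B$-vertex, and only then finishes $B$ with a degree-bounded application of Proposition~\ref{prop:max}. That decomposition, together with the pruning step and Bernoulli (rather than uniform) sampling inside Lemma~\ref{lem:edges}, is the real content, and it is also the source of the $\log$ factor the conjecture asks you to remove.

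The greedy/entropy-compression backup has a different but equally concrete failure. In a degeneracy order, when you come to colour $v_i$ with at most $d = O(\sqrt{g})$ earlier neighbours, a \emph{single} earlier neighbour $v_j$ can forbid up to $(1-\eps)k$ colours: take the conflicts on $v_jv_i$ to be $(a, 1), (a, 2), \ldots, (a, (1-\eps)k)$; if $c(v_j) = a$, which happens with probability $1/k$, almost all of $[k]$ is blocked. Your in-expectation calculation ($(1-\eps)$ forbidden colours per earlier neighbour, so $(1-\eps)d < k$ in total) is correct, but the per-neighbour random variable ranges over $\{0, (1-\eps)k\}$ rather than being $O(1)$-bounded, so Markov gives only $\Pr[\text{no colour available}] \le (1-\eps)d/k$, a constant, with no concentration to union-bound over $n$ vertices or to feed into a Moser--Tardos accounting. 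The "nibble" you mention at the end is indeed the natural thing to try, and is in the same spirit as the paper's $A$/$B$ split, but stating it is not the same as carrying it out --- and the authors, who carried out a version of it, could not get below $\sqrt{g}\log g$. Your proposal is an honest plan with the obstacles correctly identified; it is not a proof, and the obstacles you flag are the ones that are actually open.
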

\noindent
Note that $C<1/2$ due to the second example above.
In a previous version of the manuscript we incorrectly conjectured that $C$ is arbitrarily close to 1.

Theorem~\ref{thm:surfaces} follows from the following perhaps more general result. 

\begin{theorem}\label{thm:edges}
For some constant $C_2>0$, if $G$ is a multigraph with $m\ge 3$ edges and maximum multiplicity $\mu\ge1$, then $\chicon(G) \le C_2 (\mu m)^{1/4}\log (\mu m)$.
\end{theorem}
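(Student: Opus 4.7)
The plan is to combine a degeneracy bound for multigraphs with a probabilistic argument via the Lov\'asz Local Lemma. The first step is to show that $G$ has degeneracy at most $d := \lceil\sqrt{2\mu m}\rceil$. For any sub-multigraph $H$ on $n_H$ vertices, $|E(H)| \le \min(m,\mu\binom{n_H}{2})$, so the minimum degree satisfies $\delta(H) \le \min(2m/n_H,\mu(n_H-1))$; balancing these bounds over $n_H$ yields $\delta(H) \le \sqrt{2\mu m}$. Hence there is an ordering $v_1,\dots,v_n$ of the vertices such that each $v_i$ has at most $d$ edges (counted with multiplicity) to later vertices.

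Now set $k := \lceil C_2(\mu m)^{1/4}\log(\mu m)\rceil$ for a sufficiently large constant $C_2$ and assign each vertex $v$ an independent uniform colour $c(v)\in[k]$. For each $v$, let $B_v\subseteq[k]$ be the set of colours blocked by a later conflict: $a\in B_v$ iff some later edge $e=vw$ has $L_v(e)=a$ and $c(w)=L_w(e)$. Then $|B_v|$ is dominated by the number of ``activated'' later edges at $v$, which is a sum of at most $d$ Bernoulli indicators of mean $1/k$; the expected value is $O((\mu m)^{1/4}/\log(\mu m))$, and a Chernoff/Bernstein-type tail bound makes $\Pr[|B_v|\ge k/2]$ super-polynomially small in $\mu m$ once $C_2$ is chosen large. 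I then apply the symmetric Lov\'asz Local Lemma to the events $\mathcal B_v := \{|B_v|\ge k/2\}$. Each such event depends only on colours of later neighbours of $v$, so $\mathcal B_v$ is independent of $\mathcal B_u$ unless $u,v$ share a later neighbour (or are adjacent); this restricts the dependency degree to a polynomial in $\mu m$. The LLL condition is easily met, giving positive probability that $|B_v|<k/2$ for every $v$. Conditional on this event, at least $k/2$ colours are available to each $v$ in the complement of $B_v$, and one completes the colouring by a reverse-order greedy choice.

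The main obstacle will be handling dependencies in two places. First, the Chernoff argument must cope with mild dependencies within $B_v$: several later edges $vw$ to the same $w$ share the variable $c(w)$, and since $c(w)$ can block up to $\mu$ colours at $v$, a naive Bernoulli bound is lossy; a Bernstein-type bound that accounts for the second moment (together with the trivial observation that $\ch_{\con}(G)\ge\mu$ handles the regime when $\mu$ dominates) is needed to keep the tail super-polynomially small for all relevant $\mu$. Second, the reverse-order greedy step in principle changes the bad sets of earlier vertices, so one must frame the argument either as a sequential reveal — the randomness of $c(v_j)$ is only used when $v_j$ is processed, and the inductive invariant ensures $|B_{v_i}|<k/2$ at each step with super-polynomial conditional probability — or via a Moser--Tardos style resampling that maintains this invariant throughout.
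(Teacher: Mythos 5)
Your plan has a genuine gap at its central step. You reveal a uniform random colouring $c$, apply the symmetric Local Lemma to get $|B_v|<k/2$ for all $v$, and then claim a reverse-order greedy finishes. But the greedy recolouring of later vertices changes the blocked sets $B_v$ of earlier ones, so the Local Lemma conclusion, which concerns the fixed random colouring $c$, no longer applies to the colouring actually being built. You flag this, but neither of the two fixes you sketch closes it: a sequential reveal biases a colour away from uniform the moment you swap it out of $B_{v_j}$, so the Chernoff estimate for earlier $B_{v_i}$ becomes a conditional estimate across many dependent levels of the degeneracy ordering and would need a martingale/coupling argument that is not in your proposal; and a Moser--Tardos resampling for the events $\{|B_v|\ge k/2\}$ only outputs a colouring with small blocked sets, not a conflict-free colouring, so the greedy step and its circularity remain. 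The secondary issue you raise (a single later neighbour can block up to $\mu$ colours, ruining the Bernoulli tail) is also real, but the remedy you offer, the ``trivial observation that $\ch_{\con}(G)\ge\mu$'', is a lower bound and is in fact false in general (two vertices joined by $\mu$ parallel edges have $\ch_{\con}=O(\sqrt{\mu})$), so it does nothing to repair the concentration estimate when $\mu$ is large relative to $m$.

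The paper avoids both obstacles by a structurally different argument. It does not use a full degeneracy ordering but a two-level split: $A$ is the set of vertices of degree at least $\sqrt{2\mu m}$ and $B=V\setminus A$, so that both $G[A]$ and $G[B]$ have maximum degree at most $\sqrt{2\mu m}$. It then colours $A$ in one shot via Lemma~\ref{lem:edges}, which applies the General Local Lemma to an \emph{independent Bernoulli selection} of (vertex, local colour) pairs rather than one uniform colour per vertex, after a pruning step that caps, for each $v\in A$ and each local colour, the number of $A$--$A$ edges using that colour at $\sqrt{d}$. The three bad-event types simultaneously guarantee a conflict-free choice on $G[A]$ and bound by $\sqrt{d}$ the number of colours this choice blocks at each $x\in B$. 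Because $B$ is coloured afterwards by a separate application of Proposition~\ref{prop:max}, nothing ever feeds back into the colouring of $A$, and no recolouring circularity arises; the Bernoulli selection and the pruning step are exactly what make the dependency bookkeeping of the Type~III events go through. Both devices are missing from your proposal, and without a replacement for them the argument does not close.
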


\noindent
We prove Theorems~\ref{thm:surfaces} and~\ref{thm:edges} in Section~\ref{sec:proof}.
The proof of Theorem~\ref{thm:edges} is partly probabilistic in nature. It relies on a stronger version (see Lemma~\ref{lem:edges} below) of the following simple bound. 

\begin{proposition}\label{prop:max}
If $G$ is a multigraph of maximum degree $\Delta \ge 1$, then $\chicon(G) \le \lceil\sqrt{e(2\Delta-1)}\rceil$.
\end{proposition}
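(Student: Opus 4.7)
The plan is to apply the symmetric Lovász Local Lemma to a uniformly random vertex colouring. Set $k = \lceil\sqrt{e(2\Delta-1)}\rceil$, fix an arbitrary local $k$-partition $\{L_v\}_{v\in V}$, and assign to each vertex $v\in V$ a colour $c(v)\in[k]$ independently and uniformly at random. For each edge $e=uv\in E$, let $A_e$ denote the ``bad'' event that $e$ is in conflict, that is, $c(u)=L_u(e)$ and $c(v)=L_v(e)$. Because $L_u(e)$ and $L_v(e)$ are fixed values and $c(u),c(v)$ are independent uniform choices in $[k]$, we have $\Pr(A_e)=1/k^2$.

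Next I would analyse the dependency structure. The event $A_e$ is a function solely of $c(u)$ and $c(v)$, so $A_e$ is mutually independent of the family $\{A_{e'}:e'\cap e=\emptyset\}$ (treating edges as their endpoint pairs). The number of edges $e'\neq e$ sharing an endpoint with $e=uv$ is at most $(d(u)-1)+(d(v)-1)\le 2\Delta-2$, where multi-edges are correctly counted through the degrees. Hence the dependency graph has maximum degree at most $d:=2\Delta-2$.

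The symmetric Lovász Local Lemma then yields a positive probability that none of the events $A_e$ occurs, provided $ep(d+1)\le 1$, where $p=1/k^2$. This condition becomes $e(2\Delta-1)\le k^2$, which holds by the choice $k=\lceil\sqrt{e(2\Delta-1)}\rceil$. Any colouring witnessing $\bigcap_{e\in E}\overline{A_e}$ is a valid conflict colouring for $\{L_v\}$, establishing $\ch_{\con}(G)\le k$.

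I do not expect any real obstacle here: the proposition is essentially a textbook LLL application, and the main task is simply to verify the independence claim for multigraphs (so that parallel edges between $u$ and $v$, each possibly prescribing different pairs $(L_u(e),L_v(e))$, are treated as distinct bad events all depending on the same two random colours, which is fine) and to count the dependency degree correctly as $2\Delta-2$.
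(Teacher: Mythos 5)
Your proof is correct and is essentially identical to the paper's: the same uniform random colouring, the same bad events $A_e$ with $\Pr(A_e)=1/k^2$, the same dependency bound $2\Delta-2$, and the same application of the symmetric Lov\'asz Local Lemma. The only addition is your (correct) remark that parallel edges are handled transparently, which the paper leaves implicit.
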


\noindent
For completeness, we prove Proposition~\ref{prop:max} in Section~\ref{sec:degree} by a standard application of the Lov\'asz Local Lemma.
This has the following strong yet still partial converse, also shown in Section~\ref{sec:degree}.

\begin{proposition}\label{prop:average}
If $G$ is a multigraph of average degree $d \ge 3$, then $\chicon(G) \ge \lfloor\sqrt{d/\log d}\rfloor$.
\end{proposition}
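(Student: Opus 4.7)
The strategy is the classical probabilistic method: for $k = \lfloor\sqrt{d/\log d}\rfloor - 1$ I would exhibit, via a random construction, a local $k$-partition $\{L_v\}$ of $G$ that admits no conflict-free vertex colouring, which then gives $\ch_{\con}(G) > k$, as required. The construction is the obvious one: independently over all incidences $(v,e)$ with $v \in e$, choose $L_v(e) \in [k]$ uniformly at random, so that all $2m$ values are mutually independent.

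Fix an arbitrary colouring $c : V \to [k]$. For each edge $e = uv$, the event that $e$ is in conflict under $c$ requires both $L_u(e) = c(u)$ and $L_v(e) = c(v)$, and so has probability exactly $1/k^2$. Because the conflict events for distinct edges involve disjoint subsets of the underlying independent variables, the probability that $c$ is conflict-free is $(1-1/k^2)^m \le \exp(-m/k^2)$. Writing $n = |V|$ and using $m = dn/2$, a union bound over all $k^n$ colourings gives
\[
\Pr[\text{some colouring is conflict-free}] \;\le\; \exp\!\bigl(n\log k - dn/(2k^2)\bigr),
\]
which is strictly less than $1$ as soon as $k^2\log k < d/2$.

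The remaining task is the routine verification of this inequality for our choice of $k$. Since $k < \sqrt{d/\log d}$, we have $k^2 < d/\log d$ and $\log k < \tfrac12(\log d - \log\log d)$, so $k^2\log k < \tfrac{d}{2}(1 - \log\log d / \log d)$, which is strictly less than $d/2$ whenever $\log\log d > 0$, i.e.\ whenever $d\ge 3$. (If $\lfloor\sqrt{d/\log d}\rfloor \le 1$, which happens for the smallest admissible values of $d$, the conclusion is vacuous.) I do not foresee any real obstacle; the only subtlety is keeping track of the floor and confirming the inequality at the boundary $d = 3$, and noting that the events really are independent because the local assignments at different vertices, and the values of $L_v$ on different incident edges, are chosen independently.
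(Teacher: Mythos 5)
Your proposal is correct and takes essentially the same approach as the paper: a random local $k$-partition with independent uniform choices per incidence, independence of conflict events across edges, a union bound over all $k^n$ colourings, and verification of $k^2\log k < d/2$. The only cosmetic difference is that the paper sets $k=\lfloor\sqrt{d/\log d}\rfloor$ (proving the marginally stronger $\ch_{\con}(G)>\lfloor\sqrt{d/\log d}\rfloor$), whereas you use $k=\lfloor\sqrt{d/\log d}\rfloor-1$, which yields exactly the stated bound.
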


\noindent
The last two assertions alone highlight a clear distinction between single-conflict chromatic number and, say, ordinary choosability, for which the behaviour of the complete graphs $K_{d+1}$ is linear in $d$, while that of the complete bipartite graphs $K_{d,d}$ is logarithmic in $d$~\cite{ERT80}.

Notice that Proposition~\ref{prop:average} helps to provide a broad certificate of sharpness of Theorems~\ref{thm:surfaces} and~\ref{thm:edges} up to polylogarithmic factors. This is akin to the two-vertex example exhibited earlier. In particular, consider the complete multigraph on $n$ vertices of uniform edge multiplicity $\mu$. It is a  $\mu(n-1)$-regular graph, so with $\mu \binom{n}{2}$ edges, that has $\Theta(n^2)$ Euler genus. By Proposition~\ref{prop:average}, the single-conflict chromatic number is $\Omega(\sqrt{\mu n/\log(\mu n)})$, and this is not far from the $O(\sqrt{\mu n}\log(\mu n))$ upper bound implied in both Theorems~\ref{thm:surfaces} and~\ref{thm:edges}.

It may be challenging to eliminate the logarithmic factors in Theorems~\ref{thm:surfaces} and~\ref{thm:edges}.
Since we do not know the correct asymptotics in these results, we have made no effort to optimise the values of $C_1$ and $C_2$.
On the other hand, we managed to avoid the logarithmic factors for separation and adaptable choosability (see Theorems~\ref{thm:cham} and~\ref{thm:chasurfaces} below). The simpler argument uses Proposition~\ref{prop:max} directly (rather than needing Lemma~\ref{lem:edges}), and we present it in Section~\ref{sec:proof} as a warm up to proving our main result.

One might wonder if degeneracy could be an alternative way to prove Theorem~\ref{thm:surfaces}, at least in the $\mu=1$ case. That was essentially Heawood's original approach to bounding the chromatic number.
As we will see in Section~\ref{sec:definitions}, density considerations have some use (see Lemma~\ref{prop:orientation} below); however, a construction of Kostochka and Zhu~\cite{KoZh08} for the adaptable chromatic number shows that there are graphs of degeneracy $d$ which have single-conflict chromatic number greater than $d$. There might yet be some constant $C'>0$ such that the single-conflict chromatic number of any $d$-degenerate graph on $n$ vertices is at most $C'\sqrt{d}\log n$ (which would imply the $\mu=1$ case of Theorem~\ref{thm:surfaces}), but we have not been able to prove this thus far. Theorem~\ref{thm:edges} implies an upper bound of $C_2(nd)^{1/4} \log(nd)$ in this situation.

For small $g$, it would be interesting to determine the optimal upper bound on $\chicon(G)$ over all multigraphs $G$ embeddable on a surface of Euler genus $g$ in terms of the maximum multiplicity $\mu$, particularly for the boundary cases $\mu=1$ and $\mu\to\infty$.
As we will indicate in Section~\ref{sec:definitions} it is easy to verify that the extremal single-conflict chromatic number for simple planar graphs is $4$, but we have not further investigated the precise values for $\mu=1$ with, say, $g=1,2,3$. For large $\mu$ (and fixed $g$) the second instructive example above and Theorem~\ref{thm:surfaces} together give a value asymptotically between $2\mu$ and $8\mu$, and it is tempting to narrow this range.

\subsection{Probabilistic preliminaries}\label{sub:probabilistic}

We make use of the following basic probabilistic tools. We refer the reader to the monograph of Molloy and Reed~\cite{MoRe02} for further details.

\begin{chernoff}
For any $0 \le t \le np$, \[\Pr(|\Bin(n,p)-np| > t) < 2\exp(-t^2/(3np)).\]
\end{chernoff}

\begin{slll}
Consider a set $\cal E$ of (bad) events such that for each $A\in \cal E$
\begin{enumerate}
\item $\Pr(A) \le p < 1$, and
\item $A$ is mutually independent of a set of all but at most $d$ of the other events.
\end{enumerate}
If $ep(d+1)\le1$, then with positive probability none of the events in $\cal E$ occur.
\end{slll}

\begin{glll}
Consider a set ${\cal E}=\{A_1,\dots,A_n\}$ of (bad) events such that
each $A_i$ is mutually independent of ${\cal E}-({\cal D}_i\cup A_i)$, for some ${\cal D}_i \subseteq {\cal E}$.
If we have reals $x_1,\dots,x_n \in[0,1)$ such that for each $i$
\[
\Pr(A_i) \le x_i \prod_{A_j\in {\cal D}_i} (1-x_j),
\]
then the probability that none of the events in $\cal E$ occur is at least $\prod_i (1-x_i)>0$.

\end{glll}

\section{Definitions}\label{sec:definitions}

In this section, we give some more definitions, one of single-conflict chromatic number, one of adaptable choosability, and one of separation choosability. We also show how these three parameters are related, and give a few comments related to planar graphs.

First we give an alternative definition of single-conflict chromatic number, which may be insightful.
Let $G=(V,E)$ be a multigraph.
Given a local $k$-partition $\{\kappa_v\}$ of $G$, we say $G$ is {\em conflict $\{\kappa_v\}$-orientable} if there is some orientation of all edges of $G$ such that  for every vertex $v\in V$, the set of local colours of $v$ associated to the (oriented) edges leaving $v$ does not contain all of $[k]$.
Then the single-conflict chromatic number $\chicon(G)$ of $G$ is the least $k$ such that $G$ is conflict $\{\kappa_v\}$-orientable for any local $k$-partition $\{\kappa_v\}_{v\in V}$.

\begin{proof}[Proof of equivalence]
Let $G= (V,E)$ and fix a local $k$-partition $\{\kappa_v\}$ of $G$.
It suffices to show that $G$ is conflict $\{\kappa_v\}$-orientable if and only if it is conflict $\{\kappa_v\}$-colourable. 
If it has a conflict $\{\kappa_v\}$-orientation, then for every $v\in V$ choose a colour from $[k]$ that is absent from the local colours of $v$ associated to the edges leaving $v$ to produce a conflict $\{\kappa_v\}$-colouring.
If it has a conflict $\{\kappa_v\}$-colouring $c$, then orient towards $v$ all incident edges $e$ such that $\kappa_v(e)=c(v)$ to produce a conflict $\{\kappa_v\}$-orientation.
\end{proof}

From this equivalence, the following proposition becomes plain.

\begin{proposition}\label{prop:orientation}
If there is an orientation of $G$ such that every vertex has maximum outdegree less than $k$, then $\chicon(G) \le k$.
\end{proposition}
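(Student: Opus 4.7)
The plan is to invoke the orientation-based equivalence just established and observe that the hypothesised orientation serves as a conflict $\{L_v\}$-orientation for any choice of local $k$-partition $\{L_v\}$. Concretely, suppose we are given an orientation $D$ of $G$ in which every vertex $v$ has outdegree $d^+_D(v) \le k-1$. Fix any local $k$-partition $\{L_v\}_{v\in V}$ of $G$; we must show that $G$ is conflict $\{L_v\}$-orientable.

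Use the orientation $D$ itself. For each vertex $v$, the set $S_v := \{L_v(e) : e \text{ leaves } v \text{ in } D\}$ has cardinality at most $d^+_D(v) \le k-1$, so $S_v \subsetneq [k]$. Hence $S_v$ does not contain all of $[k]$, which is precisely the condition for $D$ to be a conflict $\{L_v\}$-orientation. Since the local $k$-partition was arbitrary, the equivalence between conflict $\{L_v\}$-orientability and conflict $\{L_v\}$-colourability gives $\ch_{\con}(G) \le k$. There is no real obstacle here—the proposition is an immediate corollary of the orientation reformulation, with the outdegree bound translating directly into the ``some colour is missing at $v$'' condition.
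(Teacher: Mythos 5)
Your proof is correct and takes exactly the approach the paper intends: the paper states the proposition is ``plain'' from the orientation reformulation, and you have simply spelled out the one-line argument that the given orientation itself witnesses conflict $\{L_v\}$-orientability for every local $k$-partition.
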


\noindent
This implies $\chicon(G) \le 1+\max_{S\subseteq V} \lceil|E(G[S])|/|S|\rceil$, cf.~e.g.~\cite[Lem.~3.1]{AlTa92}.

\begin{corollary}\label{cor:arboricity}
If $G$ is a planar graph, then $\chicon(G) \le 4$.
If $G$ is a triangle-free planar graph, then $\chicon(G) \le 3$.
If $G$ is a a simple graph embeddable on a surface of Euler genus $g>0$, then $\chicon(G) \le H_g/2+1$, where $H_g$ is Heawood's formula for Euler genus $g$.
\end{corollary}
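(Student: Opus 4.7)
My plan is to derive all three bounds from Proposition~\ref{prop:orientation} via the density remark that immediately follows it, namely
\[
\ch_{\con}(G) \le 1 + \max_{S\subseteq V(G)} \lceil |E(G[S])|/|S|\rceil.
\]
Since every subgraph of $G$ also embeds on the given surface, in each case the task reduces to bounding the maximum edge density among all simple graphs so embeddable.

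For planar $G$, Euler's formula gives $|E(H)|\le 3|V(H)|-6$ for every subgraph $H$ on at least three vertices, so $|E(H)|/|V(H)|<3$ and hence $\lceil|E(H)|/|V(H)|\rceil\le 3$; this gives $\ch_{\con}(G)\le 4$. The triangle-free planar case is entirely analogous, using the sharper Euler bound $|E(H)|\le 2|V(H)|-4$ to reach $\ch_{\con}(G)\le 3$.

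For $G$ embedded on a surface of positive Euler genus $g$, Euler's formula reads $|E(H)|\le 3|V(H)|-6+3g$ when $|V(H)|\ge 3$. I will combine this with the trivial bound $|E(H)|\le\binom{|V(H)|}{2}$ to obtain
\[
\frac{|E(H)|}{|V(H)|}\le \min\!\left(\frac{|V(H)|-1}{2},\ 3+\frac{3g-6}{|V(H)|}\right).
\]
The right-hand side is maximised (as a function of $|V(H)|$) exactly where the two arguments of the $\min$ coincide, that is, at $|V(H)|=H_g=(7+\sqrt{1+24g})/2$ (the positive root of $n(n-1)=2(3n-6+3g)$); at this value both arguments equal $(H_g-1)/2$. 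Therefore $\lceil|E(H)|/|V(H)|\rceil \le H_g/2$, and so $\ch_{\con}(G)\le H_g/2 + 1$.

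None of these steps should be genuinely difficult; the only mild subtlety lies in the Heawood-style balancing in the third part, between the clique bound $\binom{n}{2}$ (controlling small~$n$) and the Euler-type bound $3n-6+3g$ (controlling large~$n$), so that the maximum of the pointwise minimum is exactly the common value $(H_g-1)/2$ attained at $n=H_g$.
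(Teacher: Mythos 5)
Your overall approach (derive everything from the density/orientation bound following Proposition~\ref{prop:orientation}) is the intended one, and the first two parts are correct. The third part, however, contains a genuine gap at the final rounding step.

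You bound $|E(H)|/|V(H)|$ by the real-valued maximum of $\min\bigl((n-1)/2,\ 3+(3g-6)/n\bigr)$, which is $(n_0-1)/2$ at the crossing point $n_0=(7+\sqrt{1+24g})/2$, and then assert ``therefore $\lceil|E(H)|/|V(H)|\rceil\le H_g/2$.'' This inference does not hold: $\lceil (n_0-1)/2\rceil$ can strictly exceed $H_g/2$. Concretely, for $g=3$ one has $n_0\approx 7.77$, so $(n_0-1)/2\approx 3.39$ and $\lceil 3.39\rceil = 4$, whereas $H_3/2 = 3.5$. Indeed the integer-valued maximum of the minimum occurs at $n=8$ (not at $n=7=H_3$), where the Euler bound $m\le 3n-6+3g$ gives $m\le 27$ and $m/n = 27/8 = 3.375$, ceiling $4$. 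So the density bound as you have run it yields $\ch_{\con}(G)\le 1+4=5$ for $g=3$, not the claimed $\le H_3/2+1=4.5$. In other words, the passage from the real optimum $(n_0-1)/2$ to a ceiling bounded by $H_g/2$ silently discards the fact that the maximum of the pointwise $\min$ over \emph{integer} $n$ is attained on the integer side of $n_0$ where the Euler bound is active, and that side can push the ceiling up by one. You also implicitly assume the second branch $3+(3g-6)/n$ is decreasing, which fails for $g=1$ (where $3g-6<0$); there both branches are increasing and the supremum is $3$, approached as $n\to\infty$ rather than attained at $n_0$, so the ``maximised exactly where the two arguments coincide'' claim is not literally true in that case either (though for $g=1$ the strict inequality $m/n<3$ still rescues the conclusion). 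To repair part~(iii) you would need to argue directly about $\lceil m/n\rceil$ at integer $n$ near $n_0$, or rule out the borderline triangulations (e.g.\ an $8$-vertex, $27$-edge graph on Euler genus~$3$) rather than rely only on the continuous crossing-point computation.
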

\noindent
Recall that every $k$-degenerate graph has an orientation of maximum outdegree at most $k$. So Proposition~\ref{prop:orientation} cannot be improved in general, since
 there are $k$-degenerate graphs with adaptable chromatic number greater than $k$~\cite{KoZh08} (and, as we will shortly see, the same then is true of single-conflict chromatic number).

\medskip
Next we discuss how single-conflict chromatic number is connected to two colouring parameters, both of which are weaker versions of 
{\em list colouring}, as introduced independently by Erd\H{o}s, Rubin and Taylor~\cite{ERT80} and by Vizing~\cite{Viz76}.

For completeness, we recall the classic definition.
Let $G = (V,E)$ be a (multi)graph. 
For a positive integer $k$, a mapping $L:V\to \binom{\mathbb Z^+}{k}$ is called a {\em $k$-list-assignment} of $G$, and a colouring $c$ of $V$ is called an {\em $L$-colouring} if $c(v)\in L(v)$ for any $v\in V$.
We say $G$ is {\em $k$-choosable} if there is a proper $L$-colouring of $G$ for any $k$-list-assignment $L$. The {\em choosability $\ch(G)$} of $G$ is the least $k$ such that $G$ is $k$-choosable.

\subsection{Adaptable choosability}\label{sub:adapt}

The following list colouring parameter was proposed by Kostochka and Zhu~\cite{KoZh08}.
Let $G=(V,E)$ be a multigraph. Given a labelling $\ell: E \to {\mathbb Z}^+$ of the edges, a (not-necessarily-proper) vertex colouring $c: V \to {\mathbb Z}^+$ is {\em adapted} to $\ell$ if for every edge $e=uv \in E$ not all of $c(u)$, $c(v)$ and $\ell(e)$ are the same value.
We say that $G$ is {\em adaptably $k$-choosable} if for any $k$-list-assignment $L$ and any labelling $\ell$ of the edges of $G$, there is an $L$-colouring of $G$ that is adapted to $\ell$. The {\em adaptable choosability} $\ch_a(G)$ of $G$ is the least $k$ such that $G$ is adaptably $k$-choosable. Every proper colouring is adapted to any labelling $\ell$, so $\ch(G)\ge \ch_a(G)$ always.

We observe adaptable choosability is at most the single-conflict chromatic number.

\begin{observation}\label{obs:dp1}
For any multigraph $G$, $\chicon(G)\ge \ch_a(G)$.
\end{observation}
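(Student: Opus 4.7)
The plan is to prove the implication directly: if $G$ is conflict $k$-choosable, then $G$ is adaptably $k$-choosable, whence $\ch_{\con}(G) \ge \ch_a(G)$. So I fix an arbitrary $k$-list-assignment $L$ and edge labelling $\ell:E\to\mathbb{Z}^+$, and aim to manufacture a local $k$-partition $\{L_v\}$ whose conflict-free colourings translate into $L$-colourings adapted to $\ell$.

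For each $v\in V$, fix an arbitrary bijection $\phi_v:[k]\to L(v)$; this lets any $c:V\to[k]$ be converted into an $L$-colouring $c'(v):=\phi_v(c(v))$. I would design $\{L_v\}$ so that the unique forbidden pair at each edge $e=uv$ matches, under $\phi$, the unique bad pair $(\ell(e),\ell(e))$ for adaptedness. Concretely, when $\ell(e)\in L(u)\cap L(v)$, set $L_u(e)=\phi_u^{-1}(\ell(e))$ and $L_v(e)=\phi_v^{-1}(\ell(e))$, so that the conflict $c(u)=L_u(e)$ and $c(v)=L_v(e)$ is triggered precisely when $c'(u)=\ell(e)=c'(v)$; avoiding the conflict at $e$ then forces adaptedness at $e$.

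The only subtle point, which I expect to be the main (and only real) obstacle, is how to define $L_u(e),L_v(e)$ on edges where $\ell(e)\notin L(u)\cap L(v)$. The key observation is that here adaptedness at $e$ holds automatically for any $L$-colouring: if, say, $\ell(e)\notin L(u)$, then $c'(u)\in L(u)$ already forces $c'(u)\ne\ell(e)$. Consequently any values in $[k]$ will do---take $L_u(e)=L_v(e)=1$, for instance. The conflict we pick here might be spurious (it could further restrict the conflict problem), but this does no harm: the hypothesis that $G$ is conflict $k$-choosable, applied to the constructed $\{L_v\}$, delivers some conflict-free $c$, and the resulting $c'$ is then an $L$-colouring adapted to $\ell$ at every edge, completing the proof.
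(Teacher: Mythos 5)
Your proof is correct and uses essentially the same construction as the paper: translate the labelling $\ell$ into a local $k$-partition so that a conflict-free colouring gives an adapted $L$-colouring. The only cosmetic difference is that you make the identification $[k]\leftrightarrow L(v)$ explicit via bijections $\phi_v$ and assign a harmless dummy conflict on edges with $\ell(e)\notin L(u)\cap L(v)$, whereas the paper relies on the relabelling convention from its footnote (allowing each $L_v$ to take values directly in $L(v)$) and leaves those edges conflict-free.
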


\begin{proof}
Fix $G=(V,E)$ and let $k=\chicon(G)$. Let $L$ be a $k$-list-assignment and let $\ell$ be a labelling of the edges of $G$.
For each $v\in V$, locally colour each edge $e$ incident to $v$ with colour $a$ if $a\in L(v)$ and $\ell(e)=a$. This yields a local $k$-partition $\{\kappa_v\}$ (as mentioned in the introduction, it is not important that the image of each map $\kappa_v$ is equal to $[k]$, the image of of each $\kappa_v$ can be different sets of $k$ elements for each vertex $v$).
By the choice of $k$ there must be a conflict $\{\kappa_v\}$-colouring. 
It follows from our definition of $\{\kappa_v\}$ that this corresponds to an $L$-colouring that is adapted to $\ell$.
\end{proof}

We remark that adaptable choosability is in turn a strengthening of the adaptable chromatic number (for which the list assignment always takes all lists equal) and Hell and Zhu~\cite{HeZh08} have exhibited planar graphs with adaptable chromatic number at least $4$. So the single-conflict chromatic number is also exactly $4$ for such graphs.

\subsection{Separation choosability}\label{sub:separation}

The following list colouring parameter was proposed by Kratochv\'il, Tuza and Voigt~\cite{KTV98a}.
Let $G = (V,E)$ be a graph. 
We say a $k$-list-assignment $L$ has {\em maximum separation} if $|L(u)\cap L(v)| \le 1$ for every edge $uv$ of $G$.
We say $G$ is {\em separation $k$-choosable} if there is a proper $L$-colouring of $G$ for any $k$-list-assignment $L$ that has maximum separation. The {\em separation choosability $\ch_{\sep}(G)$} of $G$ is the least $k$ such that $G$ is separation $k$-choosable.
Since the choosability $\ch(G)$ of $G$ omits any separation requirement on the lists, $\ch(G) \ge \ch_{\sep}(G)$ always.

Let us see that separation choosability is at most adaptable choosability. This observation was made earlier~\cite{EKT19}, but we include it here for cohesion.
\begin{observation}\label{obs:adapt}
For any simple graph $G$, $\ch_a(G) \ge \ch_{\sep}(G)$.
\end{observation}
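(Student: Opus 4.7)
The plan is to reduce separation choosability to adaptable choosability by constructing a suitable edge labelling that forces any adapted colouring to actually be proper. Fix $k = \ch_a(G)$, and let $L$ be an arbitrary $k$-list-assignment of $G$ with maximum separation, so that $|L(u) \cap L(v)| \le 1$ for every edge $uv$.

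I would define a labelling $\ell \colon E(G) \to \mathbb{Z}^+$ as follows. For each edge $e = uv$, if $L(u) \cap L(v) = \{a\}$ for some colour $a$, set $\ell(e) = a$; otherwise $L(u) \cap L(v) = \emptyset$ and we set $\ell(e)$ to an arbitrary value (e.g.\ a colour outside $L(u) \cup L(v)$). Since $G$ is adaptably $k$-choosable, there exists an $L$-colouring $c$ of $G$ adapted to $\ell$.

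The key step is then to verify that $c$ is in fact a proper $L$-colouring. Consider any edge $e = uv$. If $L(u) \cap L(v) = \emptyset$, then $c(u) \in L(u)$ and $c(v) \in L(v)$ already forces $c(u) \ne c(v)$. Otherwise, $L(u) \cap L(v) = \{a\}$ and $\ell(e) = a$; any monochromatic colouring of this edge would necessarily take the common value $a$, which would equal $\ell(e)$, contradicting that $c$ is adapted to $\ell$. Hence $c(u) \ne c(v)$ in every case, so $c$ is a proper $L$-colouring. Since $L$ was an arbitrary $k$-list-assignment with maximum separation, $G$ is separation $k$-choosable, giving $\ch_{\sep}(G) \le \ch_a(G)$.

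I do not anticipate any real obstacle here: the argument is short, and the only slightly subtle point is handling the edges where the lists are disjoint (where separation already does the work for us) as opposed to where the lists share exactly one colour (where we exploit the adapted condition).
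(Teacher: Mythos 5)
Your proof is correct and takes essentially the same approach as the paper: define the labelling $\ell$ by taking the unique common list colour on each edge (arbitrary when the lists are disjoint), apply adaptable $k$-choosability to get an adapted $L$-colouring, and then observe that the separation property forces any monochromatic edge to be coloured $\ell(e)$, contradicting adaptedness. You have simply spelled out the final verification step in slightly more detail than the paper does.
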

\begin{proof}
Fix $G=(V,E)$ and let $k=\ch_a(G)$. Let $L$ be a $k$-list-assignment of maximum separation. Let $\ell$ be a labelling defined for each $uv\in E$ by taking $\ell(uv)$ as the unique element of $L(u)\cap L(v)$ if it is nonempty, and arbitrary otherwise. By the choice of $k$, there is guaranteed to be an $L$-colouring $c$ that is adapted to $\ell$. Due to the maximum separation property of $L$ and the definition of $\ell$, the colouring $c$ must be proper.
\end{proof}

Single-conflict chromatic number is a direct strengthening of separation choosability, in the same way that ``DP-colouring'' is a strengthening of choosability~\cite{DvPo17}.

\begin{proof}[Alternative proof that $\ch_{\sep}(G) \le \chicon(G)$ for any simple graph $G$]
Fix $G=(V,E)$ and let $k=\chicon(G)$. Let $L$ be a $k$-list-assignment of maximum separation. Let $\{\kappa_v\}$ be a local $k$-partition of $G$ defined as follows. 
For each edge $e=uv\in E$, if $i$ is the unique colour in $L(u)\cap L(v)$, then let $\kappa_u(e)=i$ and  $\kappa_v(e)=i$. By the choice of $k$, there is guaranteed to be a conflict $\{\kappa_v\}$-colouring $c$. Due to the maximum separation property of $L$ and the definition of $\{\kappa_v\}$, the colouring $c$ is proper.
\end{proof}

We remark that Kratochv\'il, Tuza and Voigt~\cite{KTV98b} proved that $\ch_{\sep}(K_n) \sim \sqrt{n}$ as $n\to\infty$ by the use of affine planes. This is enough to certify sharpness of our Theorems~\ref{thm:surfaces} and~\ref{thm:edges} each up to a logarithmic factor (and Proposition~\ref{prop:max} up to a constant factor) for simple graphs.

We also note that \v{S}krekovski~\cite{Skr01} conjectured that every planar graph has separation choosability at most $3$, but this remains open to the best of our knowledge. If true, it would imply that separation choosability and adaptable choosability can be distinct for some planar graphs.

\section{Degree}\label{sec:degree}

In this section, we for completeness give the proofs of Propositions~\ref{prop:max} and~\ref{prop:average}. These results closely relate single-conflict chromatic number to the maximum and average degrees, respectively, of the multigraph.

The following proof is analogous to proofs for separation and adaptable choosability~\cite{KTV98b,KoZh08}.

\begin{proof}[Proof of Proposition~\ref{prop:max}]
Let $G =(V,E)$ be a multigraph of maximum degree $\Delta$ and fix $k= \lceil\sqrt{e(2\Delta-1)}\rceil$.
Let $\{\kappa_v\}$ be a local $k$-partition of $G$. Consider a random colouring $c:V\to [k]$ where each vertex is given an independent uniform choice. For each edge $e=uv\in E$, let $A_e$ be the event that $c(u)=\kappa_u(e)$ and $c(v)=\kappa_v(e)$. For all $e\in E$, $\Pr(A_e) = 1/k^2$ and $A_e$ is mutually independent of all but at most $2\Delta-2$ other events $A_f$. Observe that $c$ is a conflict $\{\kappa_v\}$-colouring if and only if all the events $A_e$ do not occur. The Lov\'asz Local Lemma guarantees with positive probability a conflict $\{\kappa_v\}$-colouring if $e(2\Delta-1)/k^2 < 1$, which follows from the choice of $k$.
\end{proof}

Note that the bound $\sqrt{e(2\Delta-1)}$ in Proposition~\ref{prop:max} can be slightly improved to $2\sqrt{\Delta}$ using the Local Cut Lemma~\cite[Theorem 3.1]{Ber17} instead of the Lov\'asz Local Lemma, using the same set of bad events. We have deliberately chosen to present the simpler, weaker bound.

\smallskip

The following proof is analogous to that in~\cite{KPV05} or in~\cite{Ber16}.

\begin{proof}[Proof of Proposition~\ref{prop:average}]
Let $G =(V,E)$ be a multigraph of average degree $d= 2m/n$, where $n =|V|$ and $m=|E|$. Let $k = \lfloor\sqrt{d/\log d}\rfloor$ and consider a random local $k$-partition $\{\kappa_v\}$ of $G$ where, for each edge $e=uv\in E$, the pair $(\kappa_u(e),\kappa_v(e))$ is independently, uniformly chosen from pairs in $[k]^2$. For any fixed $c:V\to[k]$,  $c$ is a conflict $\{\kappa_v\}$-colouring with probability $(1-1/k^2)^m$. By the union bound and Markov's inequality, the probability that $G$ is conflict $\{\kappa_v\}$-colourable is at most $k^n(1-1/k^2)^m \le k^n\exp(-m/k^2)$. Since $G$ has average degree $d$, we have by the choice of $k$ that $k^2\log k=d\log k/\log d< d/2=m/n$. This implies $n\log k-m/k^2<0$ and so $k^n\exp(-m/k^2)<1$. We have thus shown that with positive probability there is a local $k$-partition $\{\kappa_v\}$ for which $G$ is not conflict $\{\kappa_v\}$-colourable.
\end{proof}

We remark that since $(1+o(1))\log_2 d \le \ch_{\sep}(K_{d,d}) \le \ch_{a}(K_{d,d}) \le \ch(K_{d,d}) \le (1+o(1))\log_2 d$ as $d\to\infty$~\cite{ERT80,FKK14}, Proposition~\ref{prop:average} implies that the ratio between single-conflict chromatic number and  choosability or adaptable choosability or separation choosability can be arbitrarily large even for bipartite graphs.

\section{Proof of Theorem~\ref{thm:surfaces}}\label{sec:proof}

As a warm up to the main proof, we show the following result, an adaptable choosability analogue of Theorem~\ref{thm:edges}.

\begin{theorem}\label{thm:cham}
If $G$ is a multigraph with $m\ge 2^{16}$ edges and maximum multiplicity $\mu\ge 1$, then   $\ch_{a}(G) \le 2^{11/4}\sqrt{e} (\mu m)^{1/4}$. 
\end{theorem}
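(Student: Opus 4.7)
By Observation~\ref{obs:dp1} it suffices to bound $\ch_{\con}(G)\le k$, where $k:=\lceil 2^{11/4}\sqrt{e}(\mu m)^{1/4}\rceil$. Fix any local $k$-partition $\{L_v\}$ of $G$, and set the degree threshold $D:=\lfloor k^2/(2e)\rfloor$, chosen so that $\sqrt{e(2D-1)}\le k$ as in Proposition~\ref{prop:max}. Partition $V(G)=H\cup L$ with $H:=\{v:\deg_G(v)\ge D\}$. The handshake identity gives $|H|\le 2m/D$, and from the definition of $k$ together with the assumption $m\ge 2^{16}$ one readily checks $D\ge\sqrt{2\mu m}$ (in fact $D=16\sqrt{2\mu m}$). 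Two useful consequences follow: $G[L]$ has max degree $<D$ by construction; and $G[H]$ has max degree at most $\mu(|H|-1)\le 2\mu m/D\le D$. So Proposition~\ref{prop:max} yields $\ch_{\con}(G[L])\le k$ and $\ch_{\con}(G[H])\le k$ separately.

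The challenge is to combine these into a single conflict $\{L_v\}$-coloring of $G$, since edges in $E(H,L)$ must also be handled consistently. The plan is to color $G[H]$ first at random (implementing the Local Lemma proof of Proposition~\ref{prop:max} so the output is a conflict coloring of $G[H]$) and then to extend to $L$. For each $v\in L$, the colors chosen for $v$'s $H$-neighbors forbid a set of colors at $v$ of expected size at most $\deg_H(v)/k\le D/k$, which is only a constant fraction of $k$. A Chernoff tail bound together with an LLL step across $L$ should show that with positive probability every $v\in L$ retains more than $\sqrt{e(2D-1)}$ admissible colors, after which a list-assignment variant of Proposition~\ref{prop:max} applied to $G[L]$ produces the desired extension of the coloring.

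The main obstacle I foresee is the LLL step controlling forbidden-color concentration at $L$-vertices: the bad events at distinct $v,v'\in L$ are coupled through any shared $H$-neighbor, and a single high-degree vertex of $H$ can simultaneously affect very many $v\in L$, potentially yielding a dense dependency graph. Surmounting this will likely require either a careful asymmetric application of the General Local Lemma or an argument that isolates the effect of very-high-degree $H$-vertices separately. The slack afforded by the hypothesis $m\ge 2^{16}$ should then absorb the constants in the resulting LLL and Chernoff conditions.
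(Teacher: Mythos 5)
Your proposal does not reproduce the paper's argument, and as written it has a genuine gap at exactly the point you flag. The paper proves Theorem~\ref{thm:cham} directly for \emph{adaptable} choosability, and the whole point is that adaptable choosability admits a shortcut that conflict choosability does not. Since an adaptable conflict requires $c(u)=c(v)=\ell(e)$, one can randomly split the colour universe $X=X_1\cup X_2$ into two disjoint halves, use Chernoff plus a union bound to ensure every list retains $\geq k/4$ colours in each half, restrict the high-degree part $A$ to $X_2$ and the low-degree part $B$ to $X_1$, and then the $A$--$B$ edges are automatically conflict-free because the endpoints receive colours from disjoint pools. After that, Proposition~\ref{prop:max} is applied to $G[A]$ and $G[B]$ independently, with no interaction. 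This is what lets the paper dispense with the logarithmic factor.

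By contrast, you route through $\ch_{\con}$, where a cross-edge $uv$ is dangerous whenever $c(u)=L_u(uv)$ \emph{and} $c(v)=L_v(uv)$ --- two different local colours --- so no colour-splitting trick can neutralise the cross-edges. You are then forced into the plan you describe: colour $H$ first, track which colours at each $v\in L$ become forbidden, and control their concentration by Chernoff plus a further Local Lemma step. That is not a minor technicality; it is precisely the content of the paper's Lemma~\ref{lem:edges}, which requires a nontrivial pruning step and an asymmetric application of the General Local Lemma, and which only yields $\ch_{\con}(G)=O((\mu m)^{1/4}\log(\mu m))$ rather than $O((\mu m)^{1/4})$. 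So even if you completed the missing LLL argument, you would most likely land on the weaker bound with the $\log$ factor (Theorem~\ref{thm:edges}), not the bound stated in Theorem~\ref{thm:cham}. The specific obstacle you raise --- a single high-degree $H$-vertex whose colour affects very many $v\in L$ --- is real, and in the paper's Lemma~\ref{lem:edges} it is handled by first pruning, at each $A$-vertex, every local colour that appears on more than $\sqrt d$ edges into $A$, which is essential to keep the dependency graph sparse enough for the General Local Lemma. Without an idea of that kind your sketch does not close.

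In short: the degree-threshold decomposition is the same as the paper's, but for Theorem~\ref{thm:cham} specifically you should exploit the disjoint-colour-pools trick available for adaptable choosability, rather than trying to bound the stronger parameter $\ch_{\con}$; the latter is a different and harder theorem in this paper.
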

The proof of Theorem~\ref{thm:cham}  
can be viewed as a simplified version of the proof of Theorem~\ref{thm:edges}. Afterwards, we show how the following result, an adaptable choosability analogue of Theorem~\ref{thm:surfaces}, is a consequence of Theorem~\ref{thm:cham}. (At the same time, we also show how Theorem~\ref{thm:edges} implies Theorem~\ref{thm:surfaces}.)

\begin{theorem}\label{thm:chasurfaces}
For some constant $C_3>0$, if $G$ is a multigraph of maximum multiplicity $\mu\ge 1$ that is embeddable on a surface of Euler genus $g$, then $\ch_{a}(G) \le C_3 \sqrt{\mu}(g+1)^{1/4}$.
\end{theorem}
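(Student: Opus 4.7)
The plan is to deduce Theorem~\ref{thm:chasurfaces} from Theorem~\ref{thm:cham} by a peeling argument tailored to adaptable choosability. We take $k = \lceil C_3 \sqrt{\mu}(g+1)^{1/4}\rceil$ for a sufficiently large absolute constant $C_3$, and induct on $|V(G)|$. Given an arbitrary $k$-list-assignment $L$ and edge labelling $\ell$, the goal is to produce an $L$-colouring of $G$ that is adapted to $\ell$.

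The key structural observation is that the number of colours forbidden for a vertex $v$ once its neighbours have been coloured is bounded by the \emph{simple} degree of $v$, not by its multi-degree: for each simple neighbour $u$, the only value that can possibly be forbidden for $c(v)$ is $c(u)$ itself, and only if some $uv$-edge has label $c(u)$. Thus, if $v$ has simple degree less than $k$, we may remove $v$, adapt-colour $G-v$ by induction, and greedily extend the colouring to $v$. We iteratively peel off vertices of simple degree less than $k$; either $G$ is reduced to the empty graph (and we reverse the peeling to finish) or we arrive at a non-empty core $H$ whose underlying simple graph $H_s$ has minimum degree at least $k$.

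For such a core, $H_s$ still embeds in the surface, so Euler's formula yields $|E(H_s)| \le 3(|V(H)| + g - 2)$, while the minimum degree bound gives $|E(H_s)| \ge k|V(H)|/2$. Provided $C_3$ is large enough that $k \ge 12$, this forces $|V(H)| \le 12g/k$, and hence $|E(H)| \le 3\mu(|V(H)|+g) \le 6\mu g$. Now Theorem~\ref{thm:cham} applied to $H$ gives
\[
\ch_a(H) \le 2^{11/4}\sqrt{e}\,(\mu|E(H)|)^{1/4} \le 2^{11/4}\sqrt{e}\cdot 6^{1/4}\sqrt{\mu}\,g^{1/4},
\]
which is at most $k$ provided $C_3 \ge 2^{11/4}\sqrt{e}\cdot 6^{1/4}$. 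Boundary cases (where $|E(H)| < 2^{16}$ so that Theorem~\ref{thm:cham} does not directly apply, and the base case of the induction) can be absorbed into $C_3$ by invoking the weaker bound $\ch_a(H) \le \ch_{\con}(H) \le \sqrt{e(2\Delta-1)}$ from Proposition~\ref{prop:max} with $\Delta \le 2|E(H)|$.

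The delicate point is the simple-degree observation: if one peeled by multi-degree instead, the core would admit up to $O(\mu g/k)$ vertices and $O(\mu^2 g/k + \mu g)$ edges, which yields the desired bound only when $\mu \le O(\sqrt{g+1})$. Exploiting simple degree is precisely what distinguishes adaptable choosability from conflict choosability in this argument, and is why this warm-up avoids the logarithmic factor present in Theorem~\ref{thm:surfaces}.
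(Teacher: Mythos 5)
Your proof is correct and follows essentially the same route as the paper: peel low-degree vertices down to a core, use Euler's formula on the underlying simple graph of that core to bound its edge count by $O(\mu g)$, and then invoke Theorem~\ref{thm:cham}. You are more explicit than the paper about the crucial point that for adaptable choosability the peeling threshold may be taken on \emph{simple} degree (so that the underlying simple graph of the core has minimum degree at least $k$, and the Euler bound kicks in without any $k \ge 8\mu$ hypothesis), which is exactly the observation the paper compresses into the remark that ``not only $G$, but also $\tilde{G}$ has minimum degree at least $k$.'' One small nitpick: your closing sentence slightly misattributes the source of the logarithmic saving; the log factor in Theorem~\ref{thm:surfaces} comes from Lemma~\ref{lem:edges}, not from the peeling step, and the simple-degree peeling is rather what lets the bound remain $C_3\sqrt{\mu}(g+1)^{1/4}$ without an extra $8\mu$ floor.
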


\noindent
Theorems~\ref{thm:cham} and~\ref{thm:chasurfaces} imply the same bounds for separation choosability, and both are sharp up to the choice of $C_3$ due to the complete graphs with uniform edge multiplicity $\mu$~\cite{KTV98b}.
Let us mention that the question of whether graphs of Euler genus $g$ have adaptable chromatic and choice numbers at most of order $g^{1/4}$ was first raised in December of 2007 during the {\em Graph Theory 2007} meeting in Fredericia, Denmark.

\begin{proof}[Proof of Theorem~\ref{thm:cham}]
Let $G=(V,E)$ be a multigraph with $|E|=m$ and maximum multiplicity $\mu$.
Let $k=2^{11/4}\sqrt{e}\, (\mu m)^{1/4}$,
let $L$ be a $k$-list-assignment, and consider any labelling $\ell$ of the edges of $G$. We want to prove that there is an $L$-colouring of $G$ that is adapted to $\ell$. We can assume that $G$ is connected (or else we consider each component separately), and in particular $G$ has $n\le m+1$ vertices.

Let $X=\bigcup_{v\in V} L(v)$, and let $X_1\subseteq X$ be chosen uniformly at random. Set $X_2=X\setminus X_1$. For any $i\in\{1,2\}$ and $v\in V$, $|L(v)\cap X_i|$ is binomially distributed with parameter $1/2$. The Chernoff Bound implies that $|L(v)\cap X_i|\le k/4$ with probability at most $\exp(-k/24)\le\tfrac1{2m+5}<\tfrac1{2n}$, where the first inequality uses $m\ge 2^{16}$.
By a union bound, there is a bipartition $X=X_1\cup X_2$ such that $|L(v)\cap X_i|\ge k/4$ for any $i\in\{1,2\}$ and $v\in V$.

Let $A$ be the set of vertices of degree at least $\sqrt{2\mu m}$ in $G$ and let $B=V\setminus A$. Since $|E|=m$, $A$ has most $2m/\sqrt{2\mu m} =\sqrt{2m/\mu} $ vertices, and thus $G[A]$ has maximum degree at most $\mu  \sqrt{2m/\mu}=\sqrt{2\mu m}$. By definition, $G[B]$ also has maximum degree at most $\sqrt{2\mu m}$. We remove all the colours of $X_1$ from $L(v)$ for each $v\in A$, and all the colours of $X_2$ from $L(v)$ for each $v\in B$. After this operation, each list has at least $k/4$ colours left. Since $k/4= \sqrt{2e \sqrt{2\mu m}}$, it follows from  Proposition~\ref{prop:max} that $G[A]$ has an $L$-colouring adapted to $\ell$ using only colours from $X_2$ while $G[B]$ has an $L$-colouring adapted to $\ell$ using only colours from $X_1$. Since $X_1$ and $X_2$ are disjoint, we obtain an $L$-colouring of $G$ adapted to $\ell$, as desired.
\end{proof}

Let us now see that Theorems~\ref{thm:surfaces} and~\ref{thm:chasurfaces} follow from Theorems~\ref{thm:edges} and~\ref{thm:cham}, respectively.

\begin{proof}[Proofs of Theorems~\ref{thm:surfaces} and~\ref{thm:chasurfaces}]
Assume for a contradiction that there is a counterexample $G$ to Theorem~\ref{thm:surfaces} or~\ref{thm:chasurfaces}. Take $G$ in such way that $g$ is minimised, and subject to this the number $n$ of vertices of $G$ is minimised. We can assume that $G$ is connected (or else we consider each component separately). 
Let $\tilde{G}$ be the simple graph underlying $G$.
By the minimality of $g$, $\tilde{G}$ has no embedding on a surface of smaller Euler genus, and thus has a cellular embedding on a surface $\Sigma$ of Euler genus $g$.
It follows from Euler's Formula that $\tilde{G}$ has $\tilde{m} \le 3n+3g-6$ edges, and so $G$ has $m\le \mu(3n+3g-6)$ edges.
Let $k=\max\{\lceil C_1 \sqrt{\mu}(g+1)^{1/4} \log(\mu^2(g+2))\rceil,8\mu\}$ (for Theorem~\ref{thm:surfaces}) or $k=\lceil C_3 \sqrt{\mu}(g+1)^{1/4}\rceil$ (for Theorem~\ref{thm:chasurfaces}), and assume that each vertex has $k$ local colours. If $G$ has a vertex $v$ of degree less than $k$, then remove $v$. By the minimality of $n$, we can colour $G-v$ and then find a suitable colour for $v$ (since $v$ has at least $k$ local colours and fewer than $k$ neighbours in $G$). Thus, we can assume that $G$ has minimum degree at least $k$, and thus at least $\tfrac12 nk$ edges. Consequently, $nk/2 \le \mu(3n+3g-6)$. 

For Theorem~\ref{thm:surfaces}, since $k/(2\mu) \ge 4$, we have $n\le 3g-6$ and $m\le \mu(12g-24)$. It then follows from Theorem~\ref{thm:edges} and a large enough choice of constant that $G$ has single-conflict chromatic number strictly smaller than $k$, which is a contradiction.  

For Theorem~\ref{thm:chasurfaces}, observe that not only $G$, but also $\tilde{G}$ has minimum degree at least $k$. Thus $nk/2\le \tilde{m} \le 3n+3g-6$. For a large enough choice of constant $C_3$, $k\ge 8$ and thus $n\le 3g-6$ and $m\le \mu(12g-24)$. It then follows from Theorem~\ref{thm:cham} and a large enough choice of constant that $G$ has single-conflict chromatic number strictly smaller than $k$, which is a contradiction.  
\end{proof}

To prove Theorem~\ref{thm:edges}, we require the following slightly technical result.

\begin{lemma}\label{lem:edges}
For any $d\ge 2^{23}$, let $G=(V,E)$ be a multigraph with a vertex partition $V=A\cup B$ such that
\begin{enumerate}
\item
the induced submultigraph $G[A]$ has maximum degree at most $d$,
\item
all vertices in $A$ have maximum degree at most $d^2$ in $G$, and
\item 
all vertices in $B$ have maximum degree at most $d$ in $G$.
\end{enumerate}

There is a constant $C>0$ such that for any local $k$-partition $\{\kappa_v\}$ of $G$, where $k\ge C \sqrt{d} \log d$, there is a colouring $c: A\to [k]$ such that $c$ is a conflict $\{\kappa_v\}$-colouring of $G[A]$ and no vertex $x\in B$ has more than $\sqrt{d}$ incident edges $e=xy$, $y\in A$, such that $c(y)=\kappa_y(e)$.
\end{lemma}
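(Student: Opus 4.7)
The plan is to apply the General Local Lemma to the uniformly random colouring $c : A \to [k]$ in which each $c(v)$ is drawn independently and uniformly. I use two families of bad events. For each edge $e=uv$ of $G[A]$ let $E_e$ be the conflict event $c(u)=L_u(e) \wedge c(v)=L_v(e)$, of probability exactly $1/k^2$. For each $x \in B$ let $F_x$ be the overload event $S_x > \sqrt{d}$, where $S_x := \sum_{y \in N_A(x)} Y_y$ with $Y_y := |\{e=xy : L_y(e) = c(y)\}|$. Writing $S_x$ in this form makes apparent that the $Y_y$ are mutually independent (each depending only on $c(y)$), with $\mathbb{E}[S_x] \le d/k \le \sqrt{d}/(C\log d)$, well below the threshold $\sqrt{d}$.

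The dependency structure is routine to bound. A Type-$E$ event $E_{uv}$ shares variables with at most $2d$ other Type-$E$ events (since $G[A]$ has maximum degree $d$) and with at most $2d^2$ Type-$F$ events (since $u$ and $v$ each have degree at most $d^2$ in $G$). A Type-$F$ event $F_x$ shares variables with at most $d \cdot d = d^2$ Type-$E$ events and with at most $d \cdot d^2 = d^3$ other Type-$F$ events. I would then assign weights $\alpha_E = \Theta(1/k^2)$ and $\alpha_F = \Theta(1/d^3)$; for $k \ge C \sqrt{d}\log d$ with $C$ large enough, the General Local Lemma inequalities reduce to establishing the tail bound $\Pr(F_x) \le c'/d^3$ for some absolute constant $c'$.

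The main obstacle is precisely this tail bound. Since $Y_y$ is only bounded by the edge multiplicity $m_{xy}$, which may be as large as $d$, a direct Chernoff bound for $\{0,1\}$ summands does not apply. My plan is to split $S_x = S_x^{\mathrm{light}} + S_x^{\mathrm{heavy}}$ at the multiplicity threshold $m_{xy} \le \sqrt{d}/\log^2 d$. For $S_x^{\mathrm{light}}$, Bernstein's inequality, applied with $|Y_y| \le \sqrt{d}/\log^2 d$ and variance at most $(\sqrt{d}/\log^2 d) \cdot d/k$, yields a tail of the form $\exp(-\Omega(\log^2 d))$, which is amply enough. The heavy part has at most $\sqrt{d} \log^2 d$ summands; there I would either apply a Chernoff-type bound to the count of heavy $y$'s with $Y_y > 0$ (each such indicator has probability at most $m_{xy}/k$) or register auxiliary Type-$E$-like bad events for the triples $(x,y,j)$ with $n_{xy}^{(j)}$ of order $\sqrt{d}$ and absorb these into the LLL. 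The extra logarithmic factor in $k$ is exactly what provides enough room for this refinement to fit within the LLL bounds.
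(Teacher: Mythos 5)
Your setup (random colouring of $A$, General Local Lemma, conflict events plus overload events) is in the same spirit as the paper's, but you randomise differently, and that difference is fatal to your dependency accounting --- the problem is not, as you suggest, confined to the overload tail bound.

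You choose $c$ by assigning each $v\in A$ an independent uniform colour. Consequently the overload event $F_x$ is a function of the $|N_A(x)|\le d$ random variables $\{c(y):y\in N_A(x)\}$, and each such $c(y)$ also determines the outcome of \emph{all} $d_{G[A]}(y)\le d$ conflict events incident to $y$. So, as you correctly count, $F_x$ may depend on $d^2$ Type-$E$ events. Now in the General Local Lemma the condition for $F_x$ includes the factor $(1-\alpha_E)^{d^2}$. But the condition for $E_{uv}$ forces $\alpha_E\ge\Pr(E_{uv})=1/k^2\approx 1/(C^2d\log^2 d)$, so $(1-\alpha_E)^{d^2}\lesssim \exp\bigl(-d/(C^2\log^2 d)\bigr)$: this is \emph{super-polynomially} small in $d$, and no setting of $\alpha_F<1$ can compensate, because $\Pr(F_x)$ cannot be made smaller than roughly $\exp(-O(\sqrt{d}\log\log d))$ even in the nicest case (all multiplicities $1$, $S_x\sim\Bin(d,1/k)$). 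In other words, your claim that ``the General Local Lemma inequalities reduce to establishing the tail bound $\Pr(F_x)\le c'/d^3$'' is false: the Type-$E$ dependency term already kills the inequality for $F_x$, regardless of the tail bound, so neither Bernstein's inequality nor a sharpened split can save this scheme.

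The paper avoids this by a genuinely different randomisation: each of the $|V|k$ local colours is \emph{independently} kept with probability $p=\Theta(1/\sqrt{d})$, rather than choosing one uniform colour per vertex. This buys two things. First, a conflict event (their Type~II) now depends on only two Bernoulli variables $(u,L_u(e))$, $(v,L_v(e))$, and --- after a pruning step that ensures no pair $(v,i)$ with $v\in A$ has more than $\sqrt{d}$ incident $A$--$A$ edges of local colour $i$ --- each such Bernoulli variable sits in at most $\sqrt{d}$ conflict events. An overload event (Type~III) then meets at most $d\cdot\sqrt{d}=d^{3/2}$ conflict events, rather than $d^2$; combined with $\alpha_{\mathrm{II}}=\Theta(1/d)$ (since $\Pr=p^2=\Theta(1/d)$), the offending factor becomes $(1-\alpha_{\mathrm{II}})^{d^{3/2}}\approx\exp(-\Theta(\sqrt{d}))$, which \emph{can} be absorbed because $\Pr(F_x)$ and $\alpha_F$ are both $\exp(-\Theta(\sqrt{d}))$. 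Second, Bernoulli selection introduces a new bad event --- some $v\in A$ might have no colour kept --- which the paper handles as a third event type (their Type~I); with uniform colouring this issue does not arise, which is the one genuine simplification of your approach, but it does not make up for the dependency blow-up. Your proposed auxiliary events for heavy triples would also fail for a kindred reason: an event ``$c(y)=j$'' has probability $1/k$, and up to $\Theta(d^{3/2})$ of them can share the single variable $c(y)$, so $(1-\Theta(1/k))^{d^{3/2}}\approx\exp(-\Theta(d/\log d))$ again collapses. In short, what you are missing is precisely the combination of per-colour Bernoulli randomness and the pruning step, which is the crux of the paper's proof.
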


\begin{proof}[Proof of Theorem~\ref{thm:edges}]
Let $G=(V,E)$ be a multigraph with $m$ edges and maximum multiplicity $\mu$.
Let $A$ be the set of vertices of degree at least $\sqrt{2\mu m}$ in $G$ and let $B=V\setminus A$. Since $|E|=m$, $A$ has at most $2m/\sqrt{2\mu m} =\sqrt{2m/\mu} $ vertices, and thus $G[A]$ has maximum degree at most $\mu  \sqrt{2m/\mu}=\sqrt{2\mu m}$. It follows from the definition of $B$ that $G[B]$ also has maximum degree at most $\sqrt{2\mu m}$.
Note that $\chicon(G)$ is trivially at most $m$.
So by a large enough fixed choice of $C_2$ we may assume $m$ is large enough so that the conditions of Lemma~\ref{lem:edges} are satisfied with $d=\sqrt{2\mu m}$. Let $C>0$ be the constant associated to the corresponding application of Lemma~\ref{lem:edges}. Let $k$ be an integer at least $\max\{C \sqrt{d} \log d,\lceil\sqrt{e(2d-1)}\rceil+ \sqrt{d}\}$ and let $\{\kappa_v\}$ be a local $k$-partition of $G$. It follows from an application of Lemma~\ref{lem:edges} that there is a conflict $\{\kappa_v\}$-colouring $c$ of $G[A]$. It remains to colour $B$ in such a way that it is compatible with $c$.

For each vertex $x\in B$, remove from $G$ any edge $f$ incident to $x$ if there exists some incident edge $e=xy$, $y\in A$, such that $c(y)=\kappa_y(e)$ and $\kappa_x(f)=\kappa_x(e)$.
We also (locally) remove each of the colours associated to the edges we removed.
By one of the properties of $c$ guaranteed by Lemma~\ref{lem:edges}, this process removes at most $\sqrt{d}$ of the colours incident to each vertex in $B$. By arbitrarily deleting any excess local colours as well as any of the incident edges with those colours, then relabelling colours, we are left with a local $k'$-partition $\{\kappa_v'\}$ of a submultigraph of $G[B]$ with maximum degree at most $d$, where $k'=\lceil\sqrt{e(2d-1)}\rceil$. By Proposition~\ref{prop:max}, this submultigraph admits a conflict $\{\kappa_v'\}$-colouring $c'$. The colour and edge removal process we performed ensures that, by reversing the relabelling, $c'$ corresponds to a conflict $\{\kappa_v\}$-colouring of $G[B]$ that combines with $c$ to produce a conflict $\{\kappa_v\}$-colouring of all of $G$.
\end{proof}

It remains only to prove Lemma~\ref{lem:edges}. This is done with an application of the General Local Lemma (see Section~\ref{sub:probabilistic}).

\begin{proof}[Proof of Lemma~\ref{lem:edges}]
Let $k= \lceil C \sqrt{d} \log d \rceil$ where $C$ is some constant large enough to guarantee certain properties as specified later in the proof. Let $\{\kappa_v\}$ be a local $k$-partition of $G$.

We must do a pruning operation before proceeding ---in fact, this is the crucial step in the proof.
By taking $C$ large enough, we may assume for each $v\in A$ and each $i\in [k]$ that the number of edges in $\kappa_v^{-1}(i)$ with its other endpoint also in $A$ is at most $\sqrt{d}$.
(We summarily remove all edges associated to every colour not satisfying the property, and since the maximum degree of $G[A]$ is at most $d$ this removes at most $\sqrt{d}$ of the colours around each vertex in $A$.)

Let $p = 2^{-4}/\sqrt{d}$. Consider a random selection of colours where each of the $|V|k$ local colours is selected according to an independent Bernoulli trial of probability $p$. With an eye to applying the General Local Lemma, let us define three types of (bad) events.
\begin{enumerate}
\renewcommand\labelenumi{\Roman{enumi}}
\item
For a vertex $x\in A$, none of the colours around $x$ is selected.
\item
For an edge $e=xy\in E$ with $x,y\in A$, $\kappa_x(e)$ and $\kappa_y(e)$ are both selected.
\item
For a vertex $x\in B$, there are more than $\sqrt{d}$ edges $e=xy$, $y\in A$, for which $\kappa_y(e)$ is selected.
\end{enumerate}
If we obtain a selection for which none of the above events occurs, then we are done. This is because the deselection of a colour does not introduce any new event of Type~II or~III. So we can arbitrarily deselect all but one of the colours around each vertex, and the remaining selection induces the desired colouring $c$, thanks to the fact that no events of Type~II or~III hold.

For each $x\in A$, the probability of a Type~I event is $\Pr(\Bin(k,p) = 0) = (1-p)^k\le \exp(-pk)\le \exp(-2^{-4}C\log d) <2^{-8}/d$ if $C$ is chosen large enough.
For each edge $e=xy\in E$, $x,y\in A$, the probability of a Type~II event is $p^2=2^{-8}/d$.
For each vertex $x\in B$, the probability of a Type~III event is at most $\Pr(\Bin(d,p) > \sqrt{d}) \le \Pr(|\Bin(d,p)-dp| > \sqrt{3}\cdot 2^{-4}\sqrt{d}) < 2\exp(-2^{-4}\sqrt{d})$ by the Chernoff Bound.

The choice to generate the random colouring according to independent Bernoulli trials rather than a uniform colour per vertex (as in Proposition~\ref{prop:max}) is important for us in establishing the following bounds on dependence between bad events, especially for Type~III events.
Each Type~I event is mutually independent of all but at most $d$ events of Type~I, at most $d$ events of Type~II, and at most $d^2$ events of Type~III.
Each Type~II event is mutually independent of all but at most $2$ events of Type~I, at most $2d-1$ events of Type~II, and at most $2d^2$ events of Type~III.
By the pruning operation we did at the beginning, each Type~III event is mutually independent of all but at most $d$ events of Type~I, at most $d^{3/2}$ events of Type~II, and at most $d^{3/2}$ events of Type~III.
(To be more explicit, each Type~III event is determined by up to $d$ independent Bernoulli random variables, each of which corresponds to a local colour of a neighbour. Thanks to the pruning, the number of Type~II events, say, that also use this randomness is at most $d^{3/2}$. The Type~III event is mutually independent of all other Type~II events.)

We associate weight $x_i = 2^{-7}/d$ to each event $i$ of Type~I or~II, and weight $x_i = 2\exp(-2^{-6}\sqrt{d})$ to each event $i$ of Type~III.
By the considerations above, the General Local Lemma guarantees the desired selection of colours with positive probability, provided the following three inequalities hold (where we repeatedly used that $\exp(-x-x^2)\le 1-x \le \exp(-x)$  if $0<x<0.69$):
\begin{align*}
& 1/2 \le \exp\left(-\frac{1}{2^{7}} -\frac{1}{2^{14}d} -\frac{1}{2^{7}} -\frac{1}{2^{14}d} -\frac{2d^2}{\exp(\frac{\sqrt{d}}{2^{6}})} -\frac{4d^2}{\exp(\frac{\sqrt{d}}{2^{5}})}\right); & \rm(I)\\
%
& 1/2 \le \left(1-\frac{1}{2^{7}d}\right)^2\exp\left(-\frac{1}{2^{6}} -\frac{1}{2^{13}d} -\frac{4d^2}{\exp(\frac{\sqrt{d}}{2^{6}})} -\frac{8d^2}{\exp(\frac{\sqrt{d}}{2^{5}})}\right); & \rm(II)\\
%
& -\frac{\sqrt{d}}{2^4} +\frac{\sqrt{d}}{2^6} \le -\frac{1}{2^{7}} -\frac{1}{2^{14}d} -\frac{\sqrt{d}}{2^{7}} -\frac{1}{2^{14}\sqrt{d}} -\frac{2d^{3/2}}{\exp(\frac{\sqrt{d}}{2^6})}  -\frac{4d^{3/2}}{\exp(\frac{\sqrt{d}}{2^{5}})}. & \rm(III)
\end{align*}
It is straightforward to check that $d\ge 2^{23}$ suffices.
\end{proof}

The above proof can be straightforwardly adapted for the same upper bound (with a larger constant $C$) on a stronger type of single-conflict chromatic number where additionally we must assign $\Omega(\log d)$ distinct colours per vertex instead of just one.
What this then directly implies is that, for any simple graph $G$ that is embeddable on a surface of Euler genus $g$, the single-conflict chromatic number is $O(g^{1/4} (\log g)^{5/4})$ even if we allow $O(\log g)$ conflicts per edge and demand $\Omega(\log d)$ distinct colours per vertex.

\subsection*{Notes added}

In a version of this work first circulated on arXiv (\href{https://arxiv.org/abs/1803.10962v1}{arXiv:1803.10962v1}), we called $\chicon$ the {\em least conflict choosability} and denoted it instead by $\ch_{\con}$. Upon the suggestion of a referee, we have reformulated our terminology to better place it amongst the extant colouring notions.

It transpires that single-conflict chromatic number is also naturally related to the classic problem of finding independent transversals in vertex-partitioned graphs. Specifically, given $G$ with a local $k$-partition $\{\kappa_v\}$, one can define its {\em cover graph} $H$ and an associated vertex partition as follows. The vertex set $V(H)$ consists of all pairs $(v,i)$ with $v\in V(G)$ and $i\in[k]$. The edge set $E(H)$ includes $(v,i)(v',i')$ if there is an edge $e=vv'\in E(G)$ such that $\kappa_v(e)=i$ and $\kappa_{v'}(e)=i'$. The parts of $H$ are defined according to $V(G)$, i.e.~for each $v\in V(G)$ the vertices $(v,i)$, $i\in[k]$, are all in one part.
Then $G$ is conflict $\{\kappa_v\}$-colourable if and only if $H$ contains an independent set that is transversal to the partition of $H$.
Thus one may convert between results on single-conflict chromatic number and on independent transversals.
For instance, one may recast Proposition~\ref{prop:max} as a result about independent transversals subject to some average degree condition. See~\cite{KaKe20+}
for more discussion on this perspective and related references.

\subsection*{Acknowledgement}
We are grateful to the anonymous referees for helpful suggestions leading to improvements in the presentation of our work.

\bibliographystyle{abbrv}
\bibliography{dp1}

\end{document}